\newtheorem{lemma}{Lemma}
\newtheorem{remark}{Remark}
\def\sign{{\rm sign}}
\def\d{\,\mathrm{d}}
\def\eps{\varepsilon}
\def\N{\mathbb{N}}
\def\R{\mathbb{R}}
\def\C{\hbox{\rlap{\kern.24em\raise.1ex\hbox
      {\vrule height1.3ex width.9pt}}C}}
\def\P{\hbox{\rlap{I}\kern.16em P}}
\def\Q{\hbox{\rlap{\kern.24em\raise.1ex\hbox
      {\vrule height1.3ex width.9pt}}Q}}
\def\M{\hbox{\rlap{I}\kern.16em\rlap{I}M}}
\def\Z{\hbox{\rlap{Z}\kern.20em Z}}
\def\({\begin{eqnarray}}
\def\){\end{eqnarray}}
\def\[{\begin{eqnarray*}}
\def\]{\end{eqnarray*}}
\def\part#1#2{\frac{\partial #1}{\partial #2}}
\def\grad{\nabla}
\def\pmb#1{\setbox0=\hbox{$#1$}
  \kern-.025em\copy0\kern-\wd0
  \kern-.05em\copy0\kern-\wd0
  \kern-.025em\raise.0433em\box0 }
\def\tot#1#2{\frac{\d #1}{\d #2}} 
\def\laplace{\Delta}
\def\d{\,\mathrm{d}}
\def\N{\mathbb{N}}
\def\R{\mathbb{R}}
\def\epsilon{\varepsilon}
\def\E{\mathcal{E}}
\def\P{\mathbb{P}}
\def\Q{\mathbb{Q}}
\newcommand*\di{\mathop{}\!\mathrm{d}}
\newcommand\bl{\left(}
\newcommand\br{\right)}
\newcommand\Vset{V}
\newcommand\Eset{E}
\newcommand\Neigh{\mathcal{N}}
\begin{document}

\centerline{{\Large\textbf{Murray's law for discrete and continuum models of biological networks}}}
\vskip 10mm

\centerline{
	{\large Jan Haskovec}\footnote{Mathematical and Computer Sciences and Engineering Division,
		King Abdullah University of Science and Technology,
		Thuwal 23955-6900, Kingdom of Saudi Arabia; 
		{\it jan.haskovec@kaust.edu.sa}}\qquad
	{\large Peter Markowich}\footnote{Mathematical and Computer Sciences and Engineering Division,
		King Abdullah University of Science and Technology,
		Thuwal 23955-6900, Kingdom of Saudi Arabia;
		{\it peter.markowich@kaust.edu.sa}, and
		Faculty of Mathematics, University of Vienna, Oskar-Morgenstern-Platz 1, 1090 Vienna;
		{\it peter.markowich@univie.ac.at}}\qquad
	{\large Giulia Pilli}\footnote{Faculty of Mathematics, University of Vienna, Oskar-Morgenstern-Platz 1, 1090 Vienna;
		{\it giulia.pilli@univie.ac.at}}
	}
\vskip 10mm

\noindent{\bf Abstract:}
We demonstrate the validity of Murray's law,
which represents a scaling relation for branch conductivities in a transportation network,
for discrete and continuum models of biological networks.
We first consider discrete networks with general metabolic coefficient and multiple branching nodes
and derive a generalization of the classical 3/4-law.
Next we prove an analogue of the discrete Murray's law for the continuum system
obtained in the continuum limit of the discrete model on a rectangular mesh.
Finally, we consider a continuum model derived from phenomenological considerations and show
the validity of the Murray's law for its linearly stable steady states.
\vskip 7mm

\noindent{\bf Keywords:} Biological transportation networks; Murray's law; Continuum limit.
\vskip 5mm

\noindent{\bf AMS Subject Classification:} 92C35, 05C21, 76S05
\vskip 12mm

\section{Introduction}
Murray's law 
is a basic physical principle for transportation networks which
predicts the thickness or conductivity of branches,
such that the cost for transport and maintenance of the transport medium is minimized.
This law is observed in the vascular and respiratory systems of animals, xylem in plants,
and the respiratory system of insects\cite{Sherman, Razavi}.
It is also a powerful biomimetics design tool in engineering and has been applied in the design
of microfluidic devices, self-healing materials, batteries, photocatalysts, and gas sensors;
see, e.g., Refs.~\cite{Alston, Emerson, Stephenson, Zheng}.

Murray's original analysis \cite{Murray1, Murray2} is based on the assumption that the radii inside a lumen-based system
are such that the work for transport and upkeep is minimized. Larger vessels lower the expended energy for transport (pumping power),
but increase the overall volume of blood in the system, which requires metabolic support.
Murray's law reflects the optimum balancing of the pumping power versus metabolic expenditure.
Assuming laminar flow in a single tube, the pumping power $E_p$ is given by the Joule's law,
\[
   E_p = Q\Delta P,
\]
where $Q$ is the volumetric flux and $\Delta P$ is the pressure difference between the entry and exit of the tube.
According to the Hagen-Poiseuille law for laminar flow in a three-dimensional tube,
\(  \label{Q}
   Q = \frac{\pi R^4}{8\mu L} \Delta P,
\)
where $R>0$ is the radius of the tube, $L>0$ its length, and $\mu>0$ the dynamic viscosity of the fluid.
Therefore, we have
\[
   E_p = \frac{8\mu L}{\pi R^4} Q^2.
\]
Furthermore, we assume that the metabolic cost $E_m$ is proportional to the number of blood cells
in the tube\cite{Murray1}, and that is proportional to the tube's volume,
\[
   E_m = \nu \pi L R^2,
\]
where the proportionality constant $\nu>0$ is called the metabolic coefficient.
This gives for the total energy
\(  \label{E}
   E := E_p + E_m = \frac{8\mu L}{\pi R^4} Q^2 + \nu \pi L R^2.
\)
A trivial calculation shows that the total energy is minimized
if the following relation between the flow rate and the tube radius is verified,
\(   \label{Q-r}
   Q = \sqrt{\frac{\pi^2 \nu}{16\mu}} R^3.
\)
In a branching node with a parent branch with flow rate $Q_0>0$
and children branches with flow rates $Q_1, Q_2,\dots,Q_n>0$,
we have the flux balance
\[
   Q_0 = \sum_{i=1}^n Q_i.
\]
Using \eqref{Q-r}, we obtain for the respective radii $R_0>0$ and $R_1, R_2, \dots, R_n>0$,
\(  \label{M1}
   R_0^3 = \sum_{i=1}^n R_i^3,
\)
which is the classical Murray's law for laminar blood flow\cite{Murray1, Murray2}.
It can be also expressed in terms of the edge conductivities $C_i>0$, which for three-dimensional laminar flow
are proportional to $R_i^4$, see Ref.~\cite{Rogers}, therefore
\(  \label{M2}
   C_0^{3/4} = \sum_{i=1}^n C_i^{3/4}.
\)
The above formula represents the classical 3/4-law
for optimal ratio of conductivities in 3D branching edges
with laminar flow \cite{Murray1,Murray2}.
It clearly demonstrates the nonlinear
nature of optimal network conductivity models,
in particular, the nonlinear (and nonconvex) dependence of the
total energy \eqref{E} on the edge conductivity.

The first goal of this paper is to show the validity of a generalization of the Murray's law \eqref{M2}
for general transportation networks with multiple branching nodes (Section \ref{sec:discrete}).
Then, in Section \ref{sec:cont1}, we consider a continuum limit of the discrete model
on a rectangular mesh, and derive an analogue of the discrete Murray's law for the
continuum PDE system.
Finally, in Section \ref{sec:cont2} we consider another continuum model
for biological transportation networks, derived from phenomenological considerations,
and show that a continuum version of Murray's law holds also for the linearly stable solutions of this model.
The calculations made in Sections \ref{sec:cont1} and \ref{sec:cont2} are formal,
however, they can be justified by assuming sufficient regularity of the steady state solutions
of the respective continuum PDE systems. Details will be provided in the respective Sections.

\section{Murray's law for the discrete network model}\label{sec:discrete}
We consider a given undirected connected graph $G=(\Vset,\Eset)$,
consisting of a finite set of vertices (branching nodes) $\Vset$ of size $N=|\Vset|$ and a finite set of edges $\Eset$.
Any pair of vertices is connected by at most one edge, which represents a tube (channel),
and no vertex is connected to itself.
We denote the edge  between vertices $i\in\Vset$ and $j\in\Vset$ by $(i,j)\in \Eset$.
Since the graph is undirected, $(i,j)$ and $(j,i)$ refer to the same edge.
For each edge $(i,j)\in\Eset$ of the graph $G$ we consider its length and its conductivity, denoted by  $L_{ij}=L_{ji}>0$ and $C_{ij}=C_{ji}\geq 0$, respectively. 
The edge lengths $L_{ij}>0$ are given as a datum and fixed for all $(i,j)\in\Eset$.
With each vertex $i\in\Vset$ there is associated the fluid pressure $P_i\in\R$.
The pressure drop between vertices $i\in\Vset$ and $j\in\Vset$ connected by an edge $(i,j)\in\Eset$ is denoted by 
\begin{align}\label{eq:pressuredrop}
(\Delta P)_{ij}:=P_j-P_i.
\end{align}
Note that the pressure drop is antisymmetric, i.e., by definition,  $(\Delta P)_{ij}=-(\Delta P)_{ji}$.
The oriented flux (flow rate) from vertex $j\in\Vset$ to $i\in\Vset$ is denoted by $Q_{ij}$; again, we have $Q_{ij}=-Q_{ji}$.
For the three-dimensional laminar flow, conductance (transportation capacity) $C_{ij}$ of an edge $(i,j)\in\Eset$ is proportional
to the fourth power of its diameter\cite{Rogers}. Then, \eqref{Q} is reformulated in terms of the conductivities as
\begin{align}\label{eq:flowrate}
   Q_{ij} := C_{ij}\frac{P_j-P_i}{L_{ij}}\qquad\text{for all~}(i,j)\in \Eset.
\end{align}
The flux balance in each vertex is expressed in terms of the Kirchhoff law
\begin{align}\label{eq:kirchhoff}
   -\sum_{j\in \Neigh(i)} C_{ij}\frac{ P_j-P_i}{L_{ij}}=S_i\qquad \text{for all~}i\in \Vset.
\end{align}
Here $\Neigh(i)\subset\Vset$ denotes the set of vertices connected to $i\in\Vset$ through an edge, i.e.,
\[
   \Neigh(i) := \{ j\in\Vset;\, (i,j)\in\Eset\},
\]
and $S=(S_i)_{i\in\Vset}$ is the prescribed strength of the external flow source ($S_i>0$) or sink ($S_i<0$) in node $i\in\Vset$.
Clearly, a necessary condition for the solvability of \eqref{eq:kirchhoff} is the global mass conservation
\begin{align}\label{eq:conservationmass}
   \sum_{i\in\Vset}S_i=0,
\end{align}
which we shall assume in the sequel.
Given the vector of conductivities $C=(C_{ij})_{(i,j)\in\Eset}$, the Kirchhoff law \eqref{eq:kirchhoff} is a linear system of equations for the vector of pressures $P=(P_i)_{i\in\Vset}$.
With the global mass conservation \eqref{eq:conservationmass}, the linear system \eqref{eq:kirchhoff}
is solvable if and only if the graph with edge weights $C=(C_{ij})_{(i,j)\in\Eset}$ is connected\cite{bookchapter},
where only edges with positive conductivities $C_{ij}>0$ are taken into account
(i.e., edges with zero conductivities are discarded).
Note that the solution vector $P=(P_i)_{i\in\Vset}$ is unique up to an additive constant.
The corresponding fluxes, which we denote by $Q_{ij}[C]$, are then calculated uniquely
from \eqref{eq:flowrate}.

The conductivities $C_{ij}$ are subject to an energy optimization and adaptation process,
where, in analogy to \eqref{E}, the cost functional is a sum of pumping and metabolic energies
over all edges of the graph,
\begin{align}\label{eq:energydisc}
   {\E}[C] := \sum_{(i,j)\in\Eset}\bl \frac{Q_{ij}[C]^2}{C_{ij}}+\frac{\nu}{\gamma} C_{ij}^{\gamma}\br L_{ij}.
\end{align}
Here we consider a generalization where the metabolic energy is proportional to a power $\gamma>0$ of the respective edge conductivity.
We have $\gamma=1/2$ for blood vessel systems (recall that in a three-dimensional tube the conductivity is proportional to fourth power of the tube radius,
while the metabolic cost scales quadratically with the radius, see Ref.~\cite{Murray1}).
For models of plant leaf venation the metabolic cost is proportional to the number of small tubes, which is
proportional to $C_{ij}$, and the metabolic cost is due to the effective loss of the photosynthetic power
at the area of the venation cells, which is proportional to $C_{ij}^{1/2}$. Consequently, the effective value
of $\gamma$ typically used in models of leaf venation in plants lies between $1/2$ and $1$, see, e.g., Refs.~\cite{Hu, Hu-Cai}.
Again, $\nu>0$ is the so-called metabolic coefficient.

The derivation of the Murray's law for critical points of the energy functional \eqref{eq:energydisc},
constrained by the Kirchhoff law \eqref{eq:kirchhoff}, is based on
the following explicit formula for the derivative of the pumping term in \eqref{eq:energydisc} 
with respect to the conductivities, derived in Ref.~\cite{HKM}. We restate its proof here for the sake of completeness.

\begin{lemma}\label{lem:derivative}
        Let $C=(C_{ij})_{(i,j)\in\Eset}$ be a vector of nonnegative conductivities such that the underlying graph is connected
        if only edges with strictly positive conductivities are taken into account.
	Let $Q_{ij}[C]$ be given by \eqref{eq:flowrate}, where $P=(P_i)_{i\in\Vset}$ is a solution
	of the linear system \eqref{eq:kirchhoff} with the vector of conductivities $C$.
	Then, for any fixed $(k,l)\in E$ we have
	\(  \label{eq:derivativeq}
	\part{}{C_{kl}} \sum_{(i,j)\in E} \frac{Q_{ij}[C]^2}{C_{ij}}L_{ij} = - \frac{Q_{kl}[C]^2}{C_{kl}^2}L_{kl}.
	\)
\end{lemma}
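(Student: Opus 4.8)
The plan is to rewrite the pumping energy so that its only dependence on the conductivity vector $C$ enters through the pressures $P=P[C]$, and then differentiate. Write $\mathcal{D}[C]:=\sum_{(i,j)\in\Eset}\frac{Q_{ij}[C]^2}{C_{ij}}L_{ij}$ for the left-hand side of \eqref{eq:derivativeq}. From the definition \eqref{eq:flowrate} of the flux one has, edge by edge, $\frac{Q_{ij}^2}{C_{ij}}L_{ij}=\frac{C_{ij}}{L_{ij}}(P_j-P_i)^2=Q_{ij}(P_j-P_i)$. The first step is a discrete summation by parts: regrouping $\sum_{(i,j)\in\Eset}Q_{ij}(f_j-f_i)$ by vertices — using the antisymmetry $Q_{ij}=-Q_{ji}$ together with the Kirchhoff law \eqref{eq:kirchhoff}, which reads $\sum_{j\in\Neigh(i)}Q_{ij}=-S_i$ — yields, for \emph{any} vertex function $f=(f_i)_{i\in\Vset}$,
\begin{align*}
\sum_{(i,j)\in\Eset}Q_{ij}(f_j-f_i)=\sum_{i\in\Vset}S_i f_i .
\end{align*}
Taking $f=P$ gives the representation $\mathcal{D}[C]=\sum_{i\in\Vset}S_i P_i$.

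The second step is to differentiate $\mathcal{D}$ in $C_{kl}$ in two ways and compare. Since the source $S$ is a fixed datum, the representation $\mathcal{D}[C]=\sum_i S_i P_i$ gives at once $\part{}{C_{kl}}\mathcal{D}[C]=\sum_{i\in\Vset}S_i\,\part{P_i}{C_{kl}}$. Differentiating instead $\mathcal{D}[C]=\sum_{(i,j)\in\Eset}\frac{C_{ij}}{L_{ij}}(P_j-P_i)^2$ term by term, the explicit $C_{kl}$-dependence contributes $\frac{(P_l-P_k)^2}{L_{kl}}$ and the chain rule contributes $2\sum_{(i,j)\in\Eset}Q_{ij}\bl\part{P_j}{C_{kl}}-\part{P_i}{C_{kl}}\br$, which by the summation-by-parts identity applied to $f=\part{P}{C_{kl}}$ equals $2\sum_i S_i\part{P_i}{C_{kl}}$. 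Equating the two expressions and cancelling leaves $\sum_i S_i\part{P_i}{C_{kl}}=-\frac{(P_l-P_k)^2}{L_{kl}}$; inserting $(P_l-P_k)=Q_{kl}[C]L_{kl}/C_{kl}$ from \eqref{eq:flowrate} turns the right-hand side into $-\frac{Q_{kl}[C]^2}{C_{kl}^2}L_{kl}$, which is \eqref{eq:derivativeq}. (Equivalently, one could differentiate the Kirchhoff system directly and observe that $\part{P}{C_{kl}}$ solves it with a ``dipole'' source supported on $\{k,l\}$, then use self-adjointness of the weighted graph Laplacian; this is the same computation repackaged.)

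Two points need attention. First, differentiability of $P[C]$ in $C_{kl}$: on the open set of conductivity vectors whose positive-conductivity subgraph is connected, the solution of the linear Kirchhoff system \eqref{eq:kirchhoff} depends rationally, hence smoothly, on the entries of $C$ (Cramer's rule, modulo the one-dimensional kernel of constants), and the normalization freedom is harmless because every quantity above is unchanged when a constant is added to $P$ or to $\part{P}{C_{kl}}$, since $\sum_i S_i=0$ by \eqref{eq:conservationmass}. Second — and this is the only genuinely fiddly part — the summation-by-parts identity must be set up with care: each undirected edge is counted once in $\sum_{(i,j)\in\Eset}$, whereas the vertex regrouping naturally produces a double sum over ordered adjacent pairs, so a factor $\tfrac12$ enters and the antisymmetry of $Q$ has to be used twice; once that is in place, the remainder is routine algebra. (If $C_{kl}=0$, \eqref{eq:derivativeq} is to be read with one-sided derivatives and the same computation together with a continuity argument applies; the case relevant to Murray's law is $C_{kl}>0$.)
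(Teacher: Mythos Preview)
Your proof is correct. Both arguments hinge on the same discrete summation-by-parts identity (regrouping an edge sum by vertices and invoking Kirchhoff's law), but they are organized differently. The paper differentiates $\sum_{(i,j)} Q_{ij}^2 L_{ij}/C_{ij}$ directly, obtaining the desired term $-Q_{kl}^2 L_{kl}/C_{kl}^2$ plus a cross term $2\sum_{(i,j)}(P_j-P_i)\,\partial_{C_{kl}} Q_{ij}$, and then shows this cross term vanishes because it equals $-2\sum_i P_i\,\partial_{C_{kl}} S_i = 0$. You instead first establish the dual representation $\mathcal{D}[C]=\sum_i S_i P_i$ (the discrete Joule--Dirichlet identity), differentiate both this and the form $\mathcal{D}=\sum_{(i,j)} C_{ij}(P_j-P_i)^2/L_{ij}$, and solve the resulting linear relation for $\partial_{C_{kl}}\mathcal{D}$. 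Your route yields the independently useful identity $\mathcal{D}=\sum_i S_i P_i$ and makes the pressure sensitivities $\partial_{C_{kl}} P_i$ explicit, at the price of one extra differentiation; the paper's route is a little more direct since it never needs to evaluate $\sum_i S_i\,\partial_{C_{kl}} P_i$. The underlying mechanism is the same in both.
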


\begin{proof}
Since
\begin{align*}
\part{}{C_{kl}} \sum_{(i,j)\in E} \frac{Q_{ij}[C]^2}{C_{ij}}L_{ij}=- \frac{Q_{kl}[C]^2}{C_{kl}^2}L_{kl}+2\sum_{(i,j)\in E} \frac{Q_{ij}[C]}{C_{ij}}\frac{\partial Q_{ij}[C]}{\partial C_{kl}}L_{ij},
\end{align*}
it is sufficient to show that
\begin{align*}
\sum_{(i,j)\in E} \frac{Q_{ij}[C]}{C_{ij}}\frac{\partial Q_{ij}[C]}{\partial C_{kl}}L_{ij}=0.
\end{align*}
Let $\mathbb{A}=(\mathbb{A}_{ij})$ denote the adjacency matrix of the graph $G=(\Vset,\Eset)$, i.e.\ its  coefficients are defined by
\begin{align}\label{adjM}
\mathbb{A}_{ij}=\begin{cases}
0 & \mbox{if }(i,j)\notin \Eset, \\
1 & \mbox{if }(i,j)\in \Eset.
\end{cases}
\end{align}
Note that $G$ is an undirected graph, implying $\mathbb{A}_{ij}=\mathbb{A}_{ji}$.
Due to the symmetry of $C_{ij}$ and $L_{ij}$ and antisymmetry of $Q_{ij}$ we have
\begin{align*}
2\sum_{(i,j)\in E} \frac{Q_{ij}[C]}{C_{ij}}\frac{\partial Q_{ij}[C]}{\partial C_{kl}}L_{ij}
&=\sum_{i=1}^n\sum_{j=1}^n \mathbb{A}_{ij}\bl \frac{P_j-P_i}{L_{ij}}\frac{\partial Q_{ij}[C]}{\partial C_{kl}}\br L_{ij}
\\&=\sum_{j=1}^n P_j\sum_{i=1}^n \mathbb{A}_{ij} \frac{\partial Q_{ij}[C]}{\partial C_{kl}} -\sum_{i=1}^n P_i\sum_{j=1}^n \mathbb{A}_{ij}  \frac{\partial Q_{ij}[C]}{\partial C_{kl}} 
\\&=-2\sum_{i=1}^n P_i\sum_{j=1}^n \mathbb{A}_{ij} \frac{\partial Q_{ij}[C]}{\partial C_{kl}} 
\\&=-2\sum_{i=1}^n P_i\frac{\partial}{\partial C_{kl}} \sum_{j\in N(i)} Q_{ij}.
\end{align*}
By the definition of the flow rate $Q_{ij}$ in \eqref{eq:flowrate} and Kirchhoff's law \eqref{eq:kirchhoff} we have
\begin{align*}
 - \sum_{j\in N(i)}Q_{ij }=\sum_{j\in N(i)} C_{ij} \frac{P_j-P_i}{L_{ij}}=S_i,
\end{align*}
and since the sources/sinks $S_i$ are fixed, we conclude
\begin{align*}
\sum_{(i,j)\in E} \frac{Q_{ij}[C]}{C_{ij}}\frac{\partial Q_{ij}[C]}{\partial C_{kl}}L_{ij}=0.
\end{align*}
\end{proof}

Using the identity \eqref{eq:derivativeq}, it is straightforward to calculate the derivative of the energy \eqref{eq:energydisc},
constrained by the Kirchhoff law \eqref{eq:kirchhoff}, with respect to the edge conductivities,
\begin{align*}
   \frac{\partial}{\partial C_{ij}} {\E}[C] = - \left( \frac{Q_{ij}[C]^2}{C_{ij}^2} - \nu C_{ij}^{\gamma-1} \right) L_{ij}.
\end{align*}
Since any critical point of ${\E}[C]$ satisfies
\[
   \frac{\partial}{\partial C_{ij}} {\E}[C] = 0\qquad\mbox{for all } (i,j)\in\Eset,
\]
we have
\(   \label{eq:crit}
   Q_{ij}[C]^2 = \nu C_{ij}^{\gamma+1}\qquad\mbox{for all } (i,j)\in\Eset.
\)
Now, the Kirchhoff law \eqref{eq:kirchhoff} written in terms of the fluxes $Q_{ij}:=Q_{ij}[C]$ reads
\(   \label{kiQ}
    \sum_{j\in \Neigh(i)} Q_{ij} + S_i = 0\qquad \text{for all~}i\in \Vset.
\)
Let us write the set of neighbors $\Neigh(i)$ of the node $i\in\Vset$ as the disjoint union $\Neigh(i) = \Neigh^+(i) \cup \Neigh^-(i)$,
based on the flow directions along the respective edges, i.e.,
\[
   \Neigh^+(i) := \{ j\in \Neigh(i); \; Q_{ij} \geq 0 \},\qquad
   \Neigh^-(i) := \{ j\in \Neigh(i); \; Q_{ij} < 0 \}.
\]
Then, we can rewrite \eqref{kiQ} as
\[
    \sum_{j\in \Neigh^+(i)} Q_{ij} + S_i = \sum_{j\in \Neigh^-(i)} (-Q_{ij}) \qquad \text{for all~}i\in \Vset,
\]
and, further,
\[
    \sum_{j\in \Neigh^+(i)} |Q_{ij}| + S_i = \sum_{j\in \Neigh^-(i)} |Q_{ij}| \qquad \text{for all~}i\in \Vset.
\]
Using \eqref{eq:crit}, we arrive at
\(  \label{Mur1}
    \sqrt{\nu} \sum_{j\in \Neigh^+(i)}  C_{ij}^\frac{\gamma+1}{2} + S_i = \sqrt{\nu} \sum_{j\in \Neigh^-(i)} C_{ij}^\frac{\gamma+1}{2} \qquad \text{for all~}i\in \Vset.
\)
This is the (generalized) Murray's law for the discrete model \eqref{eq:kirchhoff}, \eqref{eq:energydisc}.
Recall that for the case of three-dimensional blood vessel systems we have $\gamma=1/2$,
which leads to the power $\frac{\gamma+1}{2} = \frac34$,
so that \eqref{Mur1} becomes a direct generalization of the classical Murray's law \eqref{M2}.

\section{Murray's law for the continuum limit model on rectangular grids}\label{sec:cont1}
In Ref.~\cite{HKM} the formal continuum limit of of the discrete model \eqref{eq:kirchhoff}, \eqref{eq:energydisc} for $\gamma>1$
on rectangular equidistant grids was derived; the rigorous limit passage was studied in the consequent paper\cite{HKM2}.
The resulting continuum energy functional is of the form
\begin{align}  \label{Econt}
   \E[c] := \int_{\Omega} \grad p\cdot c\grad p + \frac{\nu}{\gamma} |c|^\gamma  \d x,
\end{align}
with the {metabolic coefficient} $\nu>0$ and the {exponent} $\gamma>1$.
The energy functional $\E[c]$ is defined on the set of nonnegative diagonal tensor fields $c=c(x)$ on a bounded domain $\Omega\subset\R^d$,
$d\in\N$,
\( \label{cTensor}
   c = \begin{pmatrix} c^1 & & \\ & \ddots & \\ & & c^d \end{pmatrix}.
\)
The symbol $|c|^\gamma$ is defined as $\sum_{k=1}^d \left| c^k \right|^\gamma$.
The scalar pressure $p=p(x)$ of the fluid occupying the domain $\Omega$ 
is subject to the Poisson equation
\begin{align} \label{Pcont}
   -\grad\cdot(c\grad p) = S,
\end{align}
equipped with no-flux boundary condition on $\partial\Omega$, and the datum $S=S(x)$ represents the intensity of sources and sinks in $\Omega$.
Let us point out that the Poisson equation \eqref{Pcont} is possibly strongly degenerate
since the eigenvalues (i.e., diagonal elements) of the permeability tensor \eqref{cTensor} may vanish.
To overcome this problem, we follow Ref.~\cite{HKM} and introduce a regularization of \eqref{Pcont} of the form
\begin{align} \label{PcontR2}
   -\grad\cdot(\P[c]\grad p) = S,
\end{align}
with the permeability tensor
\( \label{P}
   \P[c] := rI + c,
\)
where $r=r(x) \geq r_0 > 0$ is a prescribed function that models the isotropic background permeability of the medium,
and $I\in\R^{d\times d}$ is the unit matrix.
Again, \eqref{PcontR2} is equipped with the no-flux boundary condition
\(  \label{noflux1}
   n\cdot \P[c]\grad p = 0\qquad\mbox{on }\partial\Omega,
\)
where $n=n(x)$ denotes the outer unit normal vector to $\partial\Omega$.
Then, the weak formulation of the boundary value problem \eqref{PcontR2}--\eqref{noflux1}
with a test function $\phi\in C^\infty(\Omega)$ reads
\( \label{weakP}
   \int_\Omega \grad\Phi\cdot (rI + c)\grad p \d x = \int_\Omega S\phi \d x.
\)
Clearly, since we only admit nonnegative diagonal tensor fields $c=c(x)$,
\eqref{weakP} is uniformly elliptic and 
it has solutions unique up to an additive constant.
The energy functional \eqref{Econt} needs to be updated as
\(   \label{Econt2}
   \E[c] := \int_{\Omega} \grad p\cdot \P[c]\grad p + \frac{\nu}{\gamma} |c|^\gamma  \d x. 
\)
We now calculate the Euler-Lagrange equations corresponding to critical
points of the functional \eqref{Econt2} constrained by \eqref{weakP}. 

\begin{lemma}\label{lem:gfcont}
The Euler-Lagrange equations for the constrained energy minimization problem 
\eqref{weakP}, \eqref{Econt2} read
\(   \label{crit1}
   \left( \partial_{x_k} p\right)^2 - \nu  |c^k|^{\gamma-1} = 0,\qquad\mbox{for } k=1,\dots,d.
\)
\end{lemma}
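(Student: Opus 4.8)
The plan is to imitate the discrete derivation of Section~\ref{sec:discrete}, with the continuum counterpart of Lemma~\ref{lem:derivative} as the key ingredient; as in the discrete case the constraint \eqref{weakP} is eliminated by substitution, i.e. $p=p[c]$ is regarded as a function of $c$ and $\E$ in \eqref{Econt2} becomes a functional of $c$ alone. Fix $k\in\{1,\dots,d\}$ and a scalar perturbation $\psi=\psi(x)$ of the $k$-th diagonal entry of $c$; set $c_\eps:=c+\eps\,\Psi$ with $\Psi:=\diag(0,\dots,\psi,\dots,0)$ ($\psi$ in the $k$-th slot), let $p_\eps:=p[c_\eps]$ solve \eqref{weakP} for $c_\eps$ (normalised by $\int_\Omega p_\eps\d x=0$), and write $\dot p:=\partial_\eps p_\eps|_{\eps=0}$. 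Since the background permeability satisfies $r\ge r_0>0$, the bilinear form in \eqref{weakP} is uniformly elliptic on $H^1(\Omega)/\R$, so $\eps\mapsto p_\eps$ is differentiable and, differentiating \eqref{weakP}, one gets for every admissible test function $\phi$
\begin{align*}
   \int_\Omega \psi\,\partial_{x_k}\phi\;\partial_{x_k}p\d x + \int_\Omega \grad\phi\cdot\P[c]\grad\dot p\d x = 0 .
\end{align*}

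First I would establish the analogue of the identity \eqref{eq:derivativeq}, namely
\begin{align*}
   \partial_\eps\Big|_{\eps=0}\int_\Omega \grad p_\eps\cdot\P[c_\eps]\grad p_\eps\d x = -\int_\Omega \psi\,(\partial_{x_k}p)^2\d x .
\end{align*}
Expanding the left-hand side gives $\int_\Omega \psi\,(\partial_{x_k}p)^2\d x + 2\int_\Omega \grad p\cdot\P[c]\grad\dot p\d x$, and the crucial move — the exact analogue of inserting $-\sum_{j\in\Neigh(i)}Q_{ij}=S_i$ into the proof of Lemma~\ref{lem:derivative} — is to choose $\phi=p$ in the differentiated constraint above, which by symmetry of $\P[c]$ yields $2\int_\Omega\grad p\cdot\P[c]\grad\dot p\d x=-2\int_\Omega\psi\,(\partial_{x_k}p)^2\d x$; the claimed formula then follows. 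The one genuinely non-routine point here is the license to use $p\in H^1(\Omega)$ itself as a test function in \eqref{weakP} and in its $\eps$-derivative; this needs a density argument and is exactly where the regularity assumption on the steady state enters, consistent with the "formal" status of the section.

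Next, the metabolic term differentiates termwise: since $|c_\eps|^\gamma=\sum_{j=1}^d|c^j|^\gamma$ depends on $\eps$ only through the $k$-th slot and $c^k\ge0$ on the admissible set,
\begin{align*}
   \partial_\eps\Big|_{\eps=0}\int_\Omega \frac{\nu}{\gamma}|c_\eps|^\gamma\d x = \nu\int_\Omega \psi\,(c^k)^{\gamma-1}\d x = \nu\int_\Omega \psi\,|c^k|^{\gamma-1}\d x .
\end{align*}
Adding the two contributions, the Gâteaux derivative of the constrained energy \eqref{Econt2} in the direction $\Psi$ equals $\int_\Omega \psi\bl \nu|c^k|^{\gamma-1}-(\partial_{x_k}p)^2\br\d x$. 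At a critical point this vanishes for every admissible $\psi$ and every $k$, so the fundamental lemma of the calculus of variations gives $(\partial_{x_k}p)^2-\nu|c^k|^{\gamma-1}=0$ a.e.\ in $\Omega$ for $k=1,\dots,d$, which is precisely \eqref{crit1}.

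I expect the main obstacle to be the functional-analytic bookkeeping rather than the algebra: differentiability of $c\mapsto p[c]$ (an implicit-function-theorem argument built on the uniform ellipticity guaranteed by the regularisation $\P[c]=rI+c$, $r\ge r_0>0$), the interchange of $\partial_\eps$ with the spatial integrals, and the admission of $p$ as a test function — all of which go through once $(c,p)$ is assumed sufficiently smooth, as announced in the introduction. A minor caveat is that variations of $c^k$ at points where $c^k=0$ are one-sided; but for $\gamma>1$ the derivative of $\tfrac1\gamma|c^k|^\gamma$ vanishes there, so \eqref{crit1} remains the correct stationarity condition (it forces $\partial_{x_k}p=0$ wherever $c^k=0$).
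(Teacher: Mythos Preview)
Your proposal is correct and follows essentially the same route as the paper: you perturb $c$, expand $p[c+\eps\Psi]=p+\eps\dot p+\dots$, and eliminate the $\dot p$-contribution by using $p$ itself as a test function in the (differentiated) Poisson constraint, exactly as the paper does with $p_0$ in \eqref{weakP}. The only cosmetic differences are that you perturb one diagonal entry at a time rather than all of them simultaneously, and that you differentiate the constraint before testing with $p$ whereas the paper tests first and then extracts the order-$\eps$ terms; these are equivalent manipulations leading to the same identity.
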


\begin{proof}
We calculate the first variation of $\E$ in the direction $\phi$ where $\phi$ denotes a diagonal matrix with entries $\phi^1,\ldots,\phi^d$,
such that $c+\eps\phi$ is nonnegative.
Using the expansion 
\begin{align}\label{eq:expansioncont}
   p[c+\epsilon\phi]=p_0+\epsilon p_1 + \mathcal{O}(\eps^2),
\end{align}
we have
\begin{align}\label{eq:firstvariation}
   \left.\tot{}{\epsilon}\E[c+\epsilon\phi]\right|_{\epsilon=0} =
      \sum_{k=1}^d\int_{\Omega} \left( \partial_{x_k} p_0\right)^2 \phi^k + 2 (r+c^k) (\partial_{x_k} p_0)(\partial_{x_k} p_1) + \nu |c^k|^{\gamma-2} c^k \phi^k \di x.
\end{align}
Using $p_0$ as a test function in \eqref{weakP} with the permeability tensor $\P[c+\epsilon\phi] = rI + c + \epsilon \phi$
gives
\begin{align*}
   \sum_{k=1}^d\int_{\Omega} \bl r+c^k+\epsilon\phi^k\br\left( \partial_{x_k} p_0\right)^2 + \epsilon (r+c^k) (\partial_{x_k} p_0) (\partial_{x_k} p_1) \di x
   = \int_{\Omega} Sp_0\di x + \mathcal{O}(\epsilon^2).
\end{align*}
Subtracting the identity
\(  \label{identity}
   \sum_{k=1}^d \int_{\Omega} (r+c^k) \left( \partial_{x_k} p_0\right)^2 \di x= \int_{\Omega} S p_0\di x,
\)
we obtain
\begin{align*}
   \sum_{k=1}^d\int_{\Omega} \left( \partial_{x_k} p_0\right)^2 \phi^k + (r+c^k) (\partial_{x_k} p_0)(\partial_{x_k} p_1) \di x = 0.
\end{align*}
Plugging this into \eqref{eq:firstvariation} gives
\begin{align*}
   \left.\tot{}{\epsilon} \E[c+\epsilon\phi]\right|_{\epsilon=0} =
   \sum_{k=1}^d\int_{\Omega} \left[ - \left( \partial_{x_k} p_0\right)^2 + \nu |c^k|^{\gamma-1} \right] \phi^k\di x,
\end{align*}
and \eqref{crit1} follows.
\end{proof}

From \eqref{crit1} it follows that for $k=1,\dots,d$ there exist disjoint Lebesgue-measurable sets $\mathcal{A}_k^+$, $\mathcal{A}_k^-$ such that
$\Omega = {\mathcal{A}_k^+} \cup {\mathcal{A}_k^-}$ for all $k=1,\dots,d$ and
\(  \label{decomposition1}
   \partial_{x_k} p = (\chi_{\mathcal{A}_k^+}-\chi_{\mathcal{A}_k^-}) \sqrt{\nu} |c^k|^\frac{\gamma-1}{2}, \qquad \mbox{for } k=1,\dots,d.
\)
Note that by assumption $c_k \geq 0$.
Let us now choose an open subset $\Lambda \subset \Omega$ with boundary $\partial\Lambda\in C^{0,1}$
and use the characteristic function of $\Lambda$ as the test function $\phi$ in \eqref{weakP}.
This formal calculation can be justified by using a smoothened version
of the characteristic function and passing to the limit,
adopting the assumption that the flux $q:=-(rI + c) \grad p$ is locally
Lipschitz continuous on $\Lambda$.
Then the generalized Green's formula by De Giorgi-Federer\cite{Evans-Gariepy}
gives
\(   \label{Plambda}
   - \int_{\partial\Lambda} n \cdot (rI + c) \grad p \d s = \int_\Lambda S \d x,
\)
where $n=n(x)$ denotes the outward unit normal vector to $\partial\Lambda$,
and $s=s(x)$ is the $(d-1)$-dimensional measure on $\partial\Lambda$.
Using \eqref{decomposition1}, we arrive at
\( \label{pre-Murray1}
   - \sqrt{\nu} \int_{\partial\Lambda} \sum_{k=1}^d (\chi_{\mathcal{A}_k^+}-\chi_{\mathcal{A}_k^-}) |c^k|^\frac{\gamma-1}{2} (r + c^k) n_k \d s
     = \int_\Lambda S \d x.
\)
We define the inflow and outflow segments of $\partial\Lambda$ as
\[
   \partial\Lambda^+:= \{ x\in\partial\Lambda;\, n(x)\cdot q(x) < 0 \},\qquad
   \partial\Lambda^- := \{ x\in\partial\Lambda;\, n(x)\cdot q(x) > 0 \},
\]
with the flux $q=-(rI + c) \grad p$.
Then, \eqref{pre-Murray1} can be written in the form
\( \label{Murray1}
  - \sqrt{\nu} \int_{\partial\Lambda^+} \left| \sum_{k=1}^d (\chi_{\mathcal{A}_k^+}-\chi_{\mathcal{A}_k^-}) |c^k|^\frac{\gamma-1}{2} (r + c^k) n_k \right| \d s && \\
  + \sqrt{\nu} \int_{\partial\Lambda^-} \left| \sum_{k=1}^d (\chi_{\mathcal{A}_k^+}-\chi_{\mathcal{A}_k^-}) |c^k|^\frac{\gamma-1}{2} (r + c^k) n_k \right| \d s 
     &=& \int_\Lambda S \d x.  \nonumber
\)
This is the Murray's law for the continuum model \eqref{PcontR2}--\eqref{Econt2}.
The two terms on the left-hand side represent the difference of in- and outgoing fluxes through the boundary $\partial\Lambda$,
which is balanced by the volume integral of the external in- and outflows $S=S(x)$.
Note the analogy with the Murray's law for the discrete system \eqref{Mur1}.

Let us remark that the above formal calculation becomes
rigorous if we assume sufficient regularity of the flux $q=-(rI + c) \grad p$,
in particular, we need $q$ to be locally Lipschitz continuous on $\Lambda$.
In the next Section we construct solutions in the energy space which are less regular;
the proof of higher regularity is beyond the scope of this paper and we postpone it to a future work.

\subsection{Construction of solutions}\label{subsec:constrution1}
From \eqref{crit1} it follows that 
\[
   c^k = \nu^{-\frac{1}{\gamma-1}} (\partial_{x_k} p)^\frac{2}{\gamma-1}\qquad\mbox{for } k=1,\dots,d.
\]
Inserting this into \eqref{PcontR2}, we obtain
\(  \label{c-elliptic}
   - \sum_{k=1}^d \partial_{x_k} \left( \left( r + \nu^{-\frac{1}{\gamma-1}} (\partial_{x_k} p)^\frac{2}{\gamma-1} \right) \partial_{x_k} p \right) = S.
\)
The weak formulation of this nonlinear elliptic equation, equipped with a no-flux boundary condition,
reads
\(  \label{ell-weak}
   \int_\Omega \sum_{k=1}^d  \left( r + \nu^{-\frac{1}{\gamma-1}} (\partial_{x_k} p)^\frac{2}{\gamma-1} \right) (\partial_{x_k} p)  (\partial_{x_k} \phi) \d x = \int_\Omega S \phi \d x
\)
for all test functions $\phi\in C^\infty(\Omega)$. For $\gamma>1$ solutions can be constructed by the direct method of calculus of variations.

\begin{lemma}\label{lem:ell}
Let $\gamma>1$.
For every $S\in L^2(\Omega)$ there exists a unique solution $p\in H^1(\Omega)$ of \eqref{ell-weak}
satisfying $\int_\Omega p(x) \d x = 0$.
\end{lemma}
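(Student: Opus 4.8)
The plan is to apply the direct method of the calculus of variations to the functional obtained by integrating the nonlinear flux in \eqref{ell-weak}. Concretely, for $\gamma>1$ set $m := \frac{2}{\gamma-1}+2 = \frac{2\gamma}{\gamma-1} > 2$ and define, on the space $W := \{ p \in W^{1,m}(\Omega) : \int_\Omega p \d x = 0 \}$, the functional
\[
   \mathcal{J}[p] := \int_\Omega \sum_{k=1}^d \left( \frac{r}{2}(\partial_{x_k} p)^2 + \frac{\nu^{-\frac{1}{\gamma-1}}}{m}|\partial_{x_k} p|^{m} \right) \d x - \int_\Omega S p \d x.
\]
One checks that $\mathcal{J}$ is Gâteaux-differentiable on $W$ with derivative exactly the difference of the two sides of \eqref{ell-weak} (using $|\partial_{x_k}p|^{m-2}\partial_{x_k}p = (\partial_{x_k}p)^{\frac{2}{\gamma-1}+1}$, which is the correct reading of the odd power there; the term is $(r + \nu^{-1/(\gamma-1)}|\partial_{x_k}p|^{2/(\gamma-1)})\partial_{x_k}p$), so critical points of $\mathcal{J}$ are precisely weak solutions of \eqref{ell-weak}. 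The first step is therefore to record this variational reformulation and fix the functional-analytic setting.

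Next I would establish the three ingredients of the direct method. \textbf{Coercivity:} since $r \ge r_0 > 0$ and the $|\nabla p|^m$ term is present, $\mathcal{J}[p] \gtrsim \|\nabla p\|_{L^m}^{m} - \|S\|_{L^2}\|p\|_{L^2}$; by the Poincaré–Wirtinger inequality on $W$ (zero mean) and $m>2 \ge$ the relevant exponent, the linear source term is absorbed and $\mathcal{J}[p] \to +\infty$ as $\|p\|_{W^{1,m}} \to \infty$. \textbf{Weak lower semicontinuity:} the integrand $\xi \mapsto \frac{r}{2}|\xi|^2 + \frac{\nu^{-1/(\gamma-1)}}{m}\sum_k|\xi_k|^m$ is convex in $\nabla p$ and continuous, so the gradient part is weakly lower semicontinuous on $W^{1,m}$; the linear source term is weakly continuous. \textbf{Existence of a minimizer:} take a minimizing sequence, which is bounded in $W^{1,m}$ by coercivity, extract a weakly convergent subsequence (reflexivity of $W^{1,m}$, $1<m<\infty$), and pass to the limit using lower semicontinuity; the zero-mean constraint is preserved under weak convergence. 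This yields a minimizer $p \in W$, hence a weak solution of \eqref{ell-weak}. One should also note $S \in L^2(\Omega)$ suffices since $W^{1,m}(\Omega) \hookrightarrow L^2(\Omega)$ for $m>2$ (indeed for $m \ge \frac{2d}{d+2}$), so the source functional is bounded on $W$.

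\textbf{Uniqueness:} the integrand is \emph{strictly} convex in $\nabla p$ (strict convexity of $|\xi|^2$ for the $r$-part, since $r>0$), so $\mathcal{J}$ is strictly convex on $W$ modulo the fact that it depends on $p$ only through $\nabla p$ — but on $W$ the map $p \mapsto \nabla p$ is injective, so $\mathcal{J}$ is genuinely strictly convex on $W$ and the minimizer is unique; equivalently, subtract the weak formulations for two solutions $p_1,p_2$, test with $p_1-p_2$, and use strict monotonicity of the operator $\xi \mapsto (r+\nu^{-1/(\gamma-1)}|\xi|^{2/(\gamma-1)})\xi$ together with Poincaré to conclude $\nabla p_1 = \nabla p_2$ and then $p_1 = p_2$. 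The main obstacle is not any single hard estimate but rather bookkeeping: choosing the right exponent $m$ and Sobolev space so that all three pieces (coercivity, reflexivity, the $L^2$ source) fit together cleanly, and making sure the Gâteaux derivative of $\mathcal{J}$ really reproduces \eqref{ell-weak} — in particular that the odd power $(\partial_{x_k}p)^{2/(\gamma-1)+1}$ is handled correctly (it equals $|\partial_{x_k}p|^{2/(\gamma-1)}\partial_{x_k}p$, which is what the convex term $|\cdot|^m/m$ differentiates to). With that care taken, Lemma~\ref{lem:ell} follows.
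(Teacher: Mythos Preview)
Your proposal is correct and follows essentially the same route as the paper: both define the natural energy functional whose Euler--Lagrange equation is \eqref{ell-weak} and apply the direct method of the calculus of variations, using convexity and the Poincar\'e inequality on zero-mean functions. Your version is in fact a bit more careful, working in $W^{1,m}(\Omega)$ with $m=\tfrac{2\gamma}{\gamma-1}$ (the natural space for the nonlinear term) rather than $H^1(\Omega)$, and spelling out coercivity, weak lower semicontinuity, and strict convexity explicitly; the paper simply asserts uniform convexity and coercivity on $H^{1,0}(\Omega)$ and appeals to classical theory.
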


\begin{proof}
We define the functional $\mathcal{F}:H^1(\Omega)\to \mathbb{R}$ by
\begin{align}\label{f-ell-weak}
   \mathcal{F}[p]:=\int_\Omega \left(  \sum_{k=1}^d  \frac{r}{2}  |\partial_{x_k} p|^2 + \nu^{-\frac{1}{\gamma-1}} |\partial_{x_k} p|^\frac{2\gamma}{\gamma-1} \right) \d x
     - \int_\Omega p S \d x.
\end{align}
Since $\frac{2\gamma}{\gamma-1}>2$ for $\gamma>1$, the functional $\mathcal{F}$ is uniformly convex on $H^1(\Omega)$.
Moreover, a straightforward application of the Poincar\'{e} inequality provides coercivity on the set $H^{1,0}(\Omega):=\left\{ p\in H^1(\Omega);\; \int_\Omega p(x) \d x = 0 \right\}$.
The classical theory (see, e.g., Ref.~\cite{evans}) provides the existence of a unique minimizer $p\in H^{1,0}(\Omega)$.
Since, as is easy to check, \eqref{f-ell-weak} is the Euler-Lagrange equation for the minimizer of $\mathcal{F}$,
$p$ is the unique solution in $H^{1,0}(\Omega)$ of \eqref{ell-weak}.
\end{proof}

Let us note that \eqref{ell-weak} is independent of the choice of the sets $\mathcal{A}_k^+$, $\mathcal{A}_k^-$
in \eqref{decomposition1}, and so is the solution $p$ constructed in Lemma \ref{lem:ell}.
We can therefore write, for $k=1,\dots,d$,
\[
   \mathcal{A}_k^+ = \{x\in\Omega; \partial_{x_k} p(x) > 0\}, \qquad
   \mathcal{A}_k^- = \{x\in\Omega; \partial_{x_k} p(x) < 0\}.
\]

\subsection{Murray law for the continuum model with diffusion}\label{subsec:diff}
Following Ref.~\cite{HKM}, we equip the model with a linear diffusive term that accounts
for random fluctuations in the medium.
In particular, we update the energy functional \eqref{Econt2} to
\(   \label{Econt3}
   \E[c] := \int_{\Omega} D^2 |\grad c|^2 + \grad p\cdot \P[c]\grad p + \frac{\nu}{\gamma} |c|^\gamma  \d x,
\)
where $D^2>0$ is the diffusivity constant and the symbol $|\grad c|^2$
is defined as $\sum_{k=1}^d |\grad c^k|^2$.
The permeability tensor is given by \eqref{P}, i.e., $\P[c] = rI + c$.
As can be easily checked, the Euler-Lagrange equations corresponding to critical
points of the functional \eqref{Econt3} constrained by the Poisson equation \eqref{weakP} read
\(   \label{crit1diff}
   \left( \partial_{x_k} p\right)^2 + D^2 \laplace c^k - \nu  |c^k|^{\gamma-1} = 0,\qquad\mbox{for } k=1,\dots,d.
\)
Note that for sufficiently regular critical points of \eqref{Econt3}, \eqref{weakP}
we have $\nu  |c^k|^{\gamma-1} - D^2 \laplace c^k \geq 0$ almost everywhere on $\Omega$.
Then it follows that for $k=1,\dots,d$ there exist disjoint Lebesgue-measurable sets $\mathcal{A}_k^+$, $\mathcal{A}_k^-$ such that
$\Omega = {\mathcal{A}_k^+} \cup {\mathcal{A}_k^-}$ for all $k=1,\dots,d$ and
\(  \label{decomposition1diff}
   \partial_{x_k} p = (\chi_{\mathcal{A}_k^+}-\chi_{\mathcal{A}_k^-}) \left( \nu  |c^k|^{\gamma-1} - D^2 \laplace c^k \right)^{1/2}, \qquad \mbox{for } k=1,\dots,d.
\)
As before we choose an open subset $\Lambda \subset \Omega$ with boundary $\partial\Lambda\in C^{0,1}$,
and using \eqref{decomposition1diff} in \eqref{Plambda}, we obtain
\( \label{pre-Murray1diff}
   - \int_{\partial\Lambda} \sum_{k=1}^d (\chi_{\mathcal{A}_k^+}-\chi_{\mathcal{A}_k^-}) (r+c^k) \left( \nu  |c^k|^{\gamma-1} - D^2 \laplace c^k \right)^{1/2} n_k \d s
     = \int_\Lambda S \d x,
\)
where $n=n(x)$ denotes the outward unit normal vector to $\partial\Lambda$,
and $s=s(x)$ is the $(d-1)$-dimensional measure on $\partial\Lambda$.
We define the inflow and outflow segments of $\partial\Lambda$ as
\[
   \partial\Lambda^+:= \{ x\in\partial\Lambda;\, n(x)\cdot q(x) < 0 \},\qquad
   \partial\Lambda^- := \{ x\in\partial\Lambda;\, n(x)\cdot q(x) > 0 \},
\]
where we denoted the flux $q:=-(rI + c) \grad p$.
Then, \eqref{pre-Murray1diff} can be written in the form
\[ 
  - \int_{\partial\Lambda^+} \left| \sum_{k=1}^d (\chi_{\mathcal{A}_k^+}-\chi_{\mathcal{A}_k^-}) (r + c^k) \left( \nu  |c^k|^{\gamma-1} - D^2 \laplace c^k \right)^{1/2} n_k \right| \d s \\
  + \int_{\partial\Lambda^-} \left| \sum_{k=1}^d (\chi_{\mathcal{A}_k^+}-\chi_{\mathcal{A}_k^-}) (r + c^k) \left( \nu  |c^k|^{\gamma-1} - D^2 \laplace c^k \right)^{1/2} n_k \right| \d s \\
     = \int_\Lambda S \d x.  \nonumber
\]
This is the Murray's law for the continuum model \eqref{Econt3} constrained by the Poisson equation \eqref{weakP}.

\def\bbP{\mathbb{P}}

\section{Murray's law for the phenomenological continuum model}\label{sec:cont2}
In this Section we consider the continuum model briefly proposed in Ref.~\cite{Hu}
and analyzed in Refs.~\cite{HMP15, HMPS16, AAFM}.
Its phenomenological derivation, carried out in Ref.~\cite{bookchapter},
assumes that the network domain $\Omega\subset\R^d$ is occupied by a porous medium
with the permeability tensor
\(   \label{bbP}
   \bbP[m] := m\otimes m
\)
with $m=m(x)\in\R^d$ a smooth vector field on $\R^d$.
Note that $\bbP[m]$ has the eigenvalue $|m|^2$ with eigenvector $m$,
and $0$ with eigenvectors orthogonal to $m$.
Thus, it represents conduction along the direction $m$ with conductivity $|m|^2$,
while there is no conduction in directions perpendicular to $m$.
Consequently, we locally identify the network conductivity with
the principal eigenvalue of $\bbP$, i.e.,  $C_i \simeq |m|^2$
in the neighborhood of a node $i\in\Vset$.
Assuming quasi-incompressibility of the fluid,
i.e., constant fluid density along particle trajectories, we have
the local mass conservation law
\(  \label{div v}
   \grad\cdot q = S \qquad\mbox{in } \Omega,
\)
where $q=q(x)\in\R^d$ is the flux
and $S=S(x)$ is the density of external sources and sinks;
see Ref.~\cite{bookchapter} for details.
Assuming the validity of Darcy's law for slow flow in porous media\cite{Whitaker},
we have
\[
   q = - \bbP[m]\grad p,
\]
where $p=p(x)$ is the fluid pressure.
Combining with \eqref{div v} and \eqref{bbP}, we arrive at the Poisson equation
\(  \label{Poisson2}
   - \grad\cdot( (m\otimes m)\grad p ) = S.
\)
Let us point out the possible strong degeneracy of \eqref{Poisson2}, leading to
solvability issues. That is why in Refs.~\cite{HMP15, HMPS16, AAFM} the following regularization
was considered,
\(  \label{r-Poisson}
   - \grad\cdot( (r I + m\otimes m)\grad p ) = S,
\)
where $I$ is the identity matrix and the scalar function $r=r(x) \geq r_0 > 0$ describes the isotropic background permeability of the medium.
Then, \eqref{r-Poisson} is uniformly elliptic and, equipped with suitable boundary conditions, solvable.
We shall assume no flux through the boundary $\partial\Omega$, i.e., 
equip \eqref{r-Poisson} with the homogeneous Neumann boundary condition
\(  \label{noflux2}
   n\cdot \P[m]\grad p = 0\qquad\mbox{on }\partial\Omega,
\)
where $n=n(x)$ denotes the outer unit normal vector to $\partial\Omega$.
Then the solution $p=p(x)$ is unique up to an additive constant and the flux $q = - \bbP[m]\grad p$ is defined uniquely.

The local conductivity of the network being identified with $|m|^2$,
the continuum analogue of the discrete energy functional \eqref{eq:energydisc} is
\( \label{E2}
    \E[m] 
       = \int \grad p[m]\cdot \bbP[m]\grad p[m] + \frac{\nu}{\gamma} |m|^{2\gamma} \d x,
\)
where $p[m]$ denotes a solution of the Poisson equation \eqref{r-Poisson} and $\P[m] := r I + m\otimes m$.

\begin{lemma}\label{lem:EL2}
The Euler-Lagrange equations for the constrained energy minimization problem \eqref{r-Poisson}, \eqref{E2} read
\(   \label{crit2}
   (\grad p \otimes \grad p)m - \nu |m|^{2(\gamma-1)}m = 0.
\)
\end{lemma}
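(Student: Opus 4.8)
The plan is to follow the variational computation used in the proof of Lemma~\ref{lem:gfcont}, now carried out with the rank-one permeability tensor $\bbP[m]=rI+m\otimes m$ in place of the diagonal tensor. Let $\psi=\psi(x)\in\R^d$ be an arbitrary smooth vector field (note that, unlike the conductivity $c$ in Lemma~\ref{lem:gfcont}, the field $m$ is not sign-constrained, so no admissibility restriction is imposed on $\psi$), and expand the pressure as $p[m+\eps\psi]=p_0+\eps p_1+\mathcal{O}(\eps^2)$ with $p_0:=p[m]$. Using
\begin{align*}
   \left.\tot{}{\eps}\big[(m+\eps\psi)\otimes(m+\eps\psi)\big]\right|_{\eps=0} = \psi\otimes m + m\otimes\psi, \qquad
   \left.\tot{}{\eps}|m+\eps\psi|^{2\gamma}\right|_{\eps=0} = 2\gamma\,|m|^{2(\gamma-1)}\,m\cdot\psi,
\end{align*}
differentiation of the energy \eqref{E2} gives
\begin{align*}
   \left.\tot{}{\eps}\E[m+\eps\psi]\right|_{\eps=0}
   = \int_\Omega 2\,\grad p_1\cdot\bbP[m]\grad p_0 + \grad p_0\cdot(\psi\otimes m + m\otimes\psi)\grad p_0 + 2\nu\,|m|^{2(\gamma-1)}\,m\cdot\psi \d x.
\end{align*}
The middle integrand is then rewritten via the elementary identity $\grad p_0\cdot(\psi\otimes m + m\otimes\psi)\grad p_0 = 2(\grad p_0\cdot\psi)(\grad p_0\cdot m) = 2\,\psi\cdot(\grad p_0\otimes\grad p_0)\,m$.

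The next step, mirroring Lemmas~\ref{lem:derivative} and~\ref{lem:gfcont}, is to eliminate the unknown sensitivity $p_1$ by testing the weak formulation of the regularized Poisson equation \eqref{r-Poisson} written for the field $m+\eps\psi$ against $\phi=p_0$:
\begin{align*}
   \int_\Omega \grad p_0\cdot\big(rI+(m+\eps\psi)\otimes(m+\eps\psi)\big)\grad p[m+\eps\psi]\d x = \int_\Omega S\,p_0\d x.
\end{align*}
Expanding to first order in $\eps$ and subtracting the zeroth-order identity $\int_\Omega \grad p_0\cdot\bbP[m]\grad p_0\d x=\int_\Omega S\,p_0\d x$ (the weak form for $m$ tested with $p_0$), the coefficient of $\eps$ must vanish, which, by symmetry of $\bbP[m]$, gives
\begin{align*}
   \int_\Omega \grad p_1\cdot\bbP[m]\grad p_0\d x = -\int_\Omega \grad p_0\cdot(\psi\otimes m+m\otimes\psi)\grad p_0\d x = -2\int_\Omega \psi\cdot(\grad p_0\otimes\grad p_0)\,m\d x.
\end{align*}
Substituting this into the first variation, the two pumping contributions combine to $-2\int_\Omega\psi\cdot(\grad p_0\otimes\grad p_0)\,m\d x$, whence
\begin{align*}
   \left.\tot{}{\eps}\E[m+\eps\psi]\right|_{\eps=0} = -2\int_\Omega \psi\cdot\Big[(\grad p_0\otimes\grad p_0)\,m - \nu\,|m|^{2(\gamma-1)}\,m\Big]\d x.
\end{align*}
Since $\psi$ is arbitrary, the bracketed vector field vanishes a.e.\ on $\Omega$, which is precisely \eqref{crit2} with $p=p[m]=p_0$.

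As in Lemma~\ref{lem:gfcont}, the computation is formal, and the one point demanding justification is the differentiability statement $p[m+\eps\psi]=p_0+\eps p_1+\mathcal{O}(\eps^2)$ together with control of the $\mathcal{O}(\eps^2)$ remainder in $H^1(\Omega)$; this follows from the $\eps$-uniform ellipticity $rI+(m+\eps\psi)\otimes(m+\eps\psi)\ge r_0 I$ and the implicit function theorem applied to the weak problem \eqref{r-Poisson}, under whatever smoothness on $m$ makes $|m|^{2(\gamma-1)}$ locally integrable. I expect this sensitivity analysis to be the only genuine obstacle: once $p_0$ is inserted as a test function into the perturbed weak formulation, the cancellation of the $p_1$-terms and the remaining tensor algebra are routine and identical in spirit to the two preceding lemmas.
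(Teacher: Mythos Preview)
Your proof is correct and follows essentially the same approach as the paper: expand $p[m+\eps\psi]=p_0+\eps p_1+\mathcal{O}(\eps^2)$, differentiate the energy, and eliminate the $p_1$-term by testing the perturbed weak formulation of \eqref{r-Poisson} against $p_0$ and subtracting the unperturbed identity. The only differences are cosmetic---you keep the computation in tensor notation ($\bbP[m]$, $\grad p_0\otimes\grad p_0$) where the paper writes out components, and you add a brief remark on the implicit function theorem to justify the pressure expansion, which the paper leaves implicit.
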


\begin{proof}
We calculate the first variation of $\E$ given by \eqref{E2} in the direction $\phi=\phi(x)\in\R^d$.
Using the expansion 
\begin{align}\label{eq:expansioncont}
   p[m+\epsilon\phi]=p_0+\epsilon p_1 + \mathcal{O}(\eps^2),
\end{align}
we have
\(\label{eq:firstvariation2}
   \left.\tot{}{\epsilon}\E[m+\epsilon\phi]\right|_{\epsilon=0} &=&
         \int_{\Omega} 2 r \grad p_0\cdot\grad p_1 + 2(\grad p_0\cdot m)(\grad p_1\cdot m) \\
         && \qquad  + 2(\grad p_0\cdot m)(\grad p_0\cdot\phi) + 2\nu |m|^{2(\gamma-1)}m\cdot\phi \d x.  \nonumber
\)
Multiplication of the Poisson equation \eqref{r-Poisson} by $p_0$ and integration by parts,
taking into account the no-flux boundary condition, gives
\begin{align*}
   \int_{\Omega} r |\grad p_0|^2 + |m\cdot\grad p_0|^2 + \eps r\grad p_0\cdot\grad p_1 + \eps (\grad p_0\cdot m)(\grad p_1\cdot m) 
      + 2\eps (\grad p_0\cdot m)(\grad p_0\cdot\phi) \di x \\
   = \int_{\Omega} Sp_0\di x + \mathcal{O}(\epsilon^2).
\end{align*}
Subtracting the identity
\[
   \int_{\Omega} r |\grad p_0|^2 + |m\cdot\grad p_0|^2 \di x= \int_{\Omega} S p_0\di x,
\]
we obtain
\begin{align*}
   \int_{\Omega} r\grad p_0\cdot\grad p_1 + (\grad p_0\cdot m)(\grad p_1\cdot m) \di x = -2 \int_\Omega (\grad p_0\cdot m)(\grad p_0\cdot\phi) \di x.
\end{align*}
Plugging this into \eqref{eq:firstvariation2} gives
\begin{align*}
   \left.\tot{}{\epsilon} \E[m+\epsilon\phi]\right|_{\epsilon=0} = 2\int_\Omega \bigl[- (\grad p_0\otimes \grad p_0)m + \nu |m|^{2(\gamma-1)}m \bigr] \cdot\phi \d x.
\end{align*}
\end{proof}

Interpreting \eqref{crit2} as an eigenvalue-eigenvector problem for the matrix $\grad p\otimes\grad p$,
and noting that the matrix has the eigenvalues $|\grad p|^2$ (with unit eigenvector $\frac{\grad p}{|\grad p|}$) and zero,
there exists a measurable set $\mathcal{A}\subset\Omega$ such that $m$ is a real multiple of $\grad p$ on $\mathcal{A}$
and zero otherwise.
Moreover, we have
\[
    \nu |m|^{2(\gamma-1)} = |\grad p|^2 \qquad\mbox{on } \mathcal{A}. 
\]
Therefore, there exist disjoint measurable sets $\mathcal{A}^+$, $\mathcal{A}^-$ such that
${\mathcal{A}^+} \cup {\mathcal{A}^-} = \mathcal{A}$ and
\(  \label{decomposition2}
   \grad p = (\chi_{\mathcal{A}^+}-\chi_{\mathcal{A}^-}) \sqrt{\nu} |m|^{\gamma-2} m \qquad\mbox{on }\mathcal{A},
\)
where $\chi_{\mathcal{A}^+}$, resp., $\chi_{\mathcal{A}^-}$ are the characteristic functions of the sets
$\mathcal{A}^+$, resp., $\mathcal{A}^-$.
For $\gamma=1$ we define $\frac{m}{|m|}:= 0$ for $m=0$.
Note that the measurable sets $\mathcal{A}^+$, $\mathcal{A}^-$ can be chosen arbitrarily 
and each choice gives a solution of the Euler-Lagrange equations \eqref{crit2}.

In Ref.~\cite{HMP15}, Theorem 7 and Remark 6, it was shown that if $\mathrm{meas}(\mathcal{A}) < \mathrm{meas}(\Omega)$,
i.e., if $m=0$ on a set of positive Lebesgue measure, then the solution $(m,p)$
is linearly unstable for the gradient flow system corresponding to \eqref{r-Poisson}--\eqref{E2}.
In other words, a necessary condition for asymptotic stability of the gradient flow system linearized at $(m,p)$
is that $\mathrm{meas}(\mathcal{A}) = \mathrm{meas}(\Omega)$.
We adopt this assumption for the sequel, since only the linearly stable steady states
are likely to be observed in the nature or as results of numerical simulations.

Let us now choose an open subset $\Lambda \subset \Omega$ with boundary $\partial\Lambda\in C^{0,1}$
and integrate \eqref{r-Poisson} over $\Lambda$, using the Green's theorem,
\( \label{pre-Murray2}
   - \int_{\partial\Lambda} n\cdot (r I + m\otimes m) \grad p \d s = \int_\Lambda S \d x,
\)
where $n=n(x)$ denotes the outward unit normal vector to $\partial\Lambda$,
and $s=s(x)$ is the $(d-1)$-dimensional measure on $\partial\Lambda$.
Here we assumed that the flux $q:=-(r I + m\otimes m) \grad p$
is locally Lipschitz continuous on $\Lambda$, so that the generalized Green's formula
by De Giorgi-Federer\cite{Evans-Gariepy} can be applied.
We define the inflow and outflow segments of $\partial\Lambda$ as
\[
   \partial\Lambda^+ := \{ x\in\partial\Lambda;\, n(x)\cdot q(x) < 0 \},\qquad
   \partial\Lambda^- := \{ x\in\partial\Lambda;\, n(x)\cdot q(x) > 0 \},
\]
with the flux $:=-(r I + m\otimes m) \grad p$.
Note that due to \eqref{decomposition2} we have
\[
   q = -(r I + m\otimes m) \grad p = -(r I + |m|^2) \grad p \qquad\mbox{on }\mathcal{A},
\]
so that $\sign(n\cdot q) = -\sign(n\cdot\grad p)$ and we can equivalently
(and slightly more elegantly)
define the inflow and outflow segments in terms of $\grad p$ only,
\[
   \partial\Lambda^+ := \{ x\in\partial\Lambda;\, n(x)\cdot \grad p(x) > 0 \},\qquad
   \partial\Lambda^- := \{ x\in\partial\Lambda;\, n(x)\cdot \grad p(x) < 0 \}.
\]
Inserting then \eqref{decomposition2} into \eqref{pre-Murray2} we obtain
\(  \label{Murray2}
   - \sqrt{\nu} \int_{\partial\Lambda^+} \left( r + |m|^2 \right) |m|^{\gamma-2} |m\cdot n| \d s &&\\
   + \sqrt{\nu} \int_{\partial\Lambda^-} \left( r + |m|^2 \right) |m|^{\gamma-2} |m\cdot n| \d s
   &=& \int_\Lambda S \d x.
   \nonumber
\)
This is the Murray's law for the linearly stable solutions of the phenomenological continuum model \eqref{r-Poisson}--\eqref{E2}.
Clearly, the two terms on the left-hand side represent the difference of in- and outgoing fluxes
through the boundary $\partial\Lambda$, which is balanced by the volume integral of the external in- and outflows $S=S(x)$
on the right-hand side.
Note the analogy with the Murray's law for the discrete system \eqref{Mur1}.

\begin{remark}
Should the restriction to only consider linearly stable states be relaxed,
i.e., should it be admitted that $\mathrm{meas}(\mathcal{A}) < \mathrm{meas}(\Omega)$,
then \eqref{Murray2} must be modified.
Namely, since we have no information about $\grad p$ on the complement $\mathcal{A}^c$ of $\mathcal{A}$ in $\Omega$,
\eqref{Murray2} needs to be rewritten as
\[
   - \sqrt{\nu} \int_{\mathcal{A}\cap\partial\Lambda^+} \left( r + |m|^2 \right) |m|^{\gamma-2} |m\cdot n| \d s
   - \int_{\mathcal{A}^c\cap\partial\Lambda^+} r |\grad p\cdot n| \d s  &&\\
   + \sqrt{\nu} \int_{\mathcal{A}\cap\partial\Lambda^-} \left( r + |m|^2 \right) |m|^{\gamma-2} |m\cdot n| \d s 
   + \int_{\mathcal{A}^c\cap\partial\Lambda^+} r |\grad p\cdot n| \d s
   &=& \int_\Lambda S \d x,
\]
where we used the fact that $m\equiv 0$ on $\mathcal{A}^c$.
\end{remark}

Let us again remark that the central assumption for the above calculations
to be rigorously valid is that the flux $q=-(r I + m\otimes m) \grad p$
is locally Lipschitz continuous on $\Lambda$.

\subsection{Construction of solutions}
We provide a short overview of rigorous existence results for the system \eqref{r-Poisson}, \eqref{crit2}
that were obtained in Refs.~\cite{HMP15} and \cite{HMPS16}.
Interpreting equation \eqref{crit2} as an eigenvalue problem for the matrix $\grad p \otimes\grad p$,
we have the following expression for $m=m(x)$,
\(  \label{m0}
    m(x) := \left(\chi_{\mathcal{A}_+}(x) - \chi_{\mathcal{A}_-}(x)\right) \nu^\frac{1}{2(1-\gamma)} |\grad p(x)|^\frac{2-\gamma}{\gamma-1} \grad p(x),
\)
where $\mathcal{A}_+\subseteq\Omega$ and $\mathcal{A}_-\subseteq\Omega$ are measurable disjoint sets.
Inserting \eqref{m0} into \eqref{r-Poisson} gives
\(   \label{p0}
   -\grad\cdot\left[\left(r I + \nu^\frac{1}{1-\gamma} |\grad p(x)|^\frac{2}{\gamma-1}\chi_{\mathcal{A}_+\cup\mathcal{A}_-}(x) \right)\grad p(x) \right] = S,
\)
subject to the no-flux boundary condition.
Solutions of \eqref{p0} for $\gamma>1$ are constructed by the direct method of calculus of variations.

\begin{lemma}\label{lem:calcVar2}
Let $\gamma>1$.
Then for any $S\in L^2(\Omega)$ and for any pair of measurable disjoint sets $\mathcal{A}_+$, $\mathcal{A}_-\subseteq\Omega$
there exists a unique (up to an additive constant) weak solution $p\in H^1(\Omega) \cap W^{1,2\gamma/(\gamma-1)}(\mathcal{A}_+\cup\mathcal{A}_-)$ of \eqref{p0}.
\end{lemma}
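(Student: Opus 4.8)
The plan is to mimic the proof of Lemma \ref{lem:ell} via the direct method of the calculus of variations, but now on the Sobolev space that reflects the spatial inhomogeneity of the leading coefficient in \eqref{p0}. Concretely, set $U := \mathcal{A}_+\cup\mathcal{A}_-$ and define the energy functional
\begin{align*}
   \mathcal{G}[p] := \int_\Omega \frac{r}{2}|\grad p|^2 \d x
      + \nu^\frac{1}{1-\gamma}\frac{\gamma-1}{2\gamma}\int_U |\grad p|^\frac{2\gamma}{\gamma-1} \d x
      - \int_\Omega p S \d x
\end{align*}
on the function space $X := \{ p\in H^1(\Omega)\cap W^{1,2\gamma/(\gamma-1)}(U);\ \int_\Omega p\,\d x = 0\}$, normed by $\|p\|_X := \|\grad p\|_{L^2(\Omega)} + \|\grad p\|_{L^{2\gamma/(\gamma-1)}(U)}$. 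First I would check that $\mathcal{G}$ is the correct functional, i.e.\ that its Euler-Lagrange equation is exactly the weak form of \eqref{p0}; this is a direct differentiation, using that $\frac{\d}{\d t}\frac{\gamma-1}{2\gamma}t^{2\gamma/(\gamma-1)} = t^{(\gamma+1)/(\gamma-1)}$ and that $\nu^{1/(1-\gamma)}t^{(\gamma+1)/(\gamma-1)}\cdot(\grad p/|\grad p|) = \nu^{1/(1-\gamma)}|\grad p|^{2/(\gamma-1)}\grad p$, matching the nonlinear term in \eqref{p0}. Note $\frac{2\gamma}{\gamma-1}>2$ for $\gamma>1$, so the second integrand is strictly convex and superlinear.

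Next I would verify the three ingredients of the direct method on $X$. Strict convexity: $t\mapsto t^2$ and $t\mapsto t^{2\gamma/(\gamma-1)}$ are strictly convex and $r\geq r_0>0$, so $\mathcal{G}$ is strictly convex on $X$ (the source term is linear), which yields uniqueness once existence is shown. Coercivity: since $\int_\Omega p\,\d x=0$, the Poincar\'e inequality gives $\|p\|_{L^2(\Omega)}\le C\|\grad p\|_{L^2(\Omega)}$, so $|\int_\Omega pS\,\d x|\le C\|S\|_{L^2}\|\grad p\|_{L^2}$ is controlled by the quadratic term with a small loss, and the remaining $\frac{r_0}{2}\|\grad p\|_{L^2}^2$ plus the $W^{1,2\gamma/(\gamma-1)}$ term force $\mathcal{G}[p]\to+\infty$ as $\|p\|_X\to\infty$. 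Weak lower semicontinuity: both $\int_\Omega \frac{r}{2}|\grad p|^2$ and $\int_U |\grad p|^{2\gamma/(\gamma-1)}$ are convex and continuous in the strong topology of $X$, hence weakly lower semicontinuous, and the linear term is weakly continuous. Then a minimizing sequence is bounded in the reflexive space $X$, has a weakly convergent subsequence, and the limit is the unique minimizer; since the first variation of $\mathcal{G}$ at the minimizer vanishes in every admissible direction, this minimizer is the unique weak solution of \eqref{p0} in $X$.

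The main obstacle I anticipate is not any single estimate but the careful bookkeeping of the function space $X$: one must confirm that $X$ with the stated norm is a Banach space (indeed reflexive, being a closed subspace of the reflexive product $H^1(\Omega)\times W^{1,2\gamma/(\gamma-1)}(U)$ under the natural embedding $p\mapsto(p,p|_U)$), that the trace/measurability issues in restricting $\grad p$ to the merely measurable set $U$ cause no trouble (the integrand $|\grad p|^{2\gamma/(\gamma-1)}\chi_U$ is measurable since $U$ is), and that the class of admissible test directions in deriving the Euler-Lagrange equation is rich enough --- here one restricts to $\phi\in C^\infty(\Omega)$, for which all integrals converge because $\grad p\in L^{2\gamma/(\gamma-1)}(U)$ makes the nonlinear flux $\nu^{1/(1-\gamma)}|\grad p|^{2/(\gamma-1)}\grad p$ lie in $L^{2\gamma/(\gamma+1)}(U)$, the dual exponent. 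Uniqueness up to an additive constant follows because $\mathcal{G}$ depends on $p$ only through $\grad p$ and the linear term (which is well-defined for zero-mean $p$ and shifts by a constant otherwise, using $\int_\Omega S\,\d x=0$ if one drops the normalization), so strict convexity in $\grad p$ pins down $p$ modulo constants.
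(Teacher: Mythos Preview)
Your proposal is correct and follows essentially the same approach as the paper, which simply invokes the direct method of the calculus of variations and refers to Theorem~6 of Ref.~\cite{HMP15} for the details. Your write-up supplies those details (the functional $\mathcal{G}$, strict convexity, coercivity via Poincar\'e, weak lower semicontinuity, and the function-space bookkeeping for $X$), all of which are in line with the standard argument the paper is pointing to.
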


\begin{proof}
Direct method of calculus of variations, see Theorem 6 of Ref.~\cite{HMP15}.
\end{proof}

In the case $\gamma=1$, \eqref{crit2} reads
\[   
   (\grad p\otimes\grad p) m  = \nu m,
\]
i.e., $m$ is either the zero vector or an eigenvector of the matrix $\grad p\otimes\grad p$
with eigenvalue $\nu>0$.
The spectrum of $\grad p\otimes\grad p$ consists of zero and $|\grad p|^2$,
so that $m\neq 0$ is only possible if $|\grad p|^2 = \nu$.
Therefore, for every stationary solution there exists a measurable function $\lambda=\lambda(x)$
such that
\[
   m(x) = \lambda(x)\chi_{\{|\grad p|^2=\nu\}}(x) \grad p(x)
\]
and $p$ solves the highly nonlinear Poisson equation
\(   \label{hnp}
   -\grad\cdot\left[ \left(1+ \lambda(x)^2 \chi_{\{c^2|\grad p|^2=\nu\}}(x) \right) \grad p \right] = S.
\)
In Ref.~\cite{HMPS16}, Section 4.2, it was shown that \eqref{hnp} is equivalent to the free boundary problem
\(
   -\grad\cdot\left[(1+a(x)^2)\grad p\right] &=& S,\qquad p \in H^1_0(\Omega), \label{NLP1} \\
   |\grad p(x)|^2 &\leq& \nu,\qquad \mbox{a.e. on }\Omega, \label{NLP2} \\
   a(x)^2 \left[|\grad p(x)|^2-\nu \right] &=& 0,\qquad \mbox{a.e. on }\Omega, \label{NLP3}
\)
for some measurable function $a^2=a(x)^2$ on $\Omega$ which is the Lagrange multiplier
for the condition \eqref{NLP2}.
Solutions of \eqref{NLP1}--\eqref{NLP3} can be obtained as minimizers of the energy functional
\[
    \mathcal{F}[p]:=\int_\Omega \left( \frac{|\grad p|^2}{2} - Sp  \right) \d x
\]
on the set $\{p\in H_0^1(\Omega), c^2|\grad p|^2\leq 1 \mbox{ a.e. on } \Omega\}$,
or using the penalty method; see Section 4.2 of Ref.~\cite{HMPS16} for details.

Finally, for $1/2 \leq \gamma < 1$ the above approach fails due to the singularity of the term $|\grad p(x)|^\frac{2-\gamma}{\gamma-1} \grad p(x)$
at $|\grad p|=0$ and the resulting non-boundedness from below of the associated functional.
However, stationary solutions can be constructed by ``cutting off'' small values of $|\grad p|$, see Section 4.3 of Ref.~\cite{HMPS16} for details.

\subsection{Murray's law for the phenomenological continuum model with diffusion}\label{subsec:diff2}
Following Refs.~\cite{HMP15} and \cite{HMPS16}, we equip the model with a linear diffusive term that accounts
for random fluctuations in the medium.
In particular, we update the energy functional \eqref{E2} to
\( \label{E3}
    \E[m] = \int D^2 |\grad m|^2 + \grad p[m]\cdot \bbP[m]\grad p[m] + \frac{\nu}{\gamma} |m|^{2\gamma} \d x,
\)
where $D^2>0$ is the diffusivity constant, $p[m]$ denotes a solution of the Poisson equation \eqref{r-Poisson} and $\P[m] = r I + m\otimes m$.
As can be easily checked, the Euler-Lagrange equations corresponding to critical
points of the functional \eqref{E3} constrained by the Poisson equation \eqref{r-Poisson} read
\[
    (\grad p\otimes\grad p) m = - D^2 \laplace m + \nu |m|^{2(\gamma-1)} m.
\]
Note that for sufficiently regular critical points of \eqref{E3}, \eqref{r-Poisson}
we have $(- D^2 \laplace m + \nu |m|^{2(\gamma-1)} m)\cdot m \geq 0$ almost everywhere in $\Omega$.
Let us denote
\[
   \mathcal{A} := \left\{ x\in\Omega;\, \bigl(- D^2 \laplace m(x) + \nu |m|^{2(\gamma-1)} m(x)\bigr)\cdot m(x) > 0 \right\}.
\]
Then, there exist disjoint measurable sets $\mathcal{A}^+$, $\mathcal{A}^-$ such that
${\mathcal{A}^+} \cup {\mathcal{A}^-} = \mathcal{A}$ and
\(  \label{decomposition2diff}
   \grad p = (\chi_{\mathcal{A}^+}-\chi_{\mathcal{A}^-}) \left( \nu |m|^{2\gamma} - D^2m\cdot \laplace m \right)^{-1/2}
      \left( \nu |m|^{2(\gamma-1)} m - D^2 \laplace m \right),
\)
where $\chi_{\mathcal{A}^+}$, resp., $\chi_{\mathcal{A}^-}$ are the characteristic functions of the sets
$\mathcal{A}^+$, resp., $\mathcal{A}^-$.
On the set $\mathcal{A}$ we have $(\grad p\otimes\grad p) m = 0$, which is equivalent to $m\cdot\grad p = 0$.

Let us now choose an open subset $\Lambda \subset \Omega$ with boundary $\partial\Lambda\in C^{0,1}$
and define the inflow and outflow segments of $\partial\Lambda$ as
\[
   \partial\Lambda^+ := \{ x\in\partial\Lambda;\, n(x)\cdot q(x) < 0 \},\qquad
   \partial\Lambda^- := \{ x\in\partial\Lambda;\, n(x)\cdot q(x) > 0 \},
\]
where we denoted the flux $q:=-(r I + m\otimes m) \grad p$.
Using \eqref{decomposition2diff} in \eqref{pre-Murray2} leads to
\[  
   - \int_{\mathcal{A}\cap\partial\Lambda^+}  \left( \nu |m|^{2\gamma} - D^2m\cdot \laplace m \right)^{-1/2} \left|
        n\cdot \left( rI + m\otimes m \right) \left( \nu |m|^{2(\gamma-1)} m - D^2 \laplace m \right)\right|  \d s \\
   - \int_{\mathcal{A}^c\cap\partial\Lambda^+}  r |\grad p\cdot n| \d s \\ 
   + \int_{\mathcal{A}\cap\partial\Lambda^-}  \left( \nu |m|^{2\gamma} - D^2m\cdot \laplace m \right)^{-1/2}  \left|
       n\cdot \left( rI + m\otimes m \right) \left( \nu |m|^{2(\gamma-1)} m - D^2 \laplace m \right) \right| \d s \\
   + \int_{\mathcal{A}^c\cap\partial\Lambda^-}  r |\grad p\cdot n| \d s \\ 
   = \int_\Lambda S \d x.
\]
This is the Murray's law for the phenomenological continuum model \eqref{E2} constrained by the Poisson equation \eqref{r-Poisson}.

\section*{Acknowledgment}
Giulia Pilli acknowleges support from the Austrian Science Fund (FWF) through the grants F 65 and W 1245.

\end{document}